\newcommand{\Spvek}[2][r]{%
  \gdef\@VORNE{1}
  \left(\hskip-\arraycolsep%
    \begin{array}{#1}\vekSp@lten{#2}\end{array}%
  \hskip-\arraycolsep\right)}
\def\vekSp@lten#1{\xvekSp@lten#1;vekL@stLine;}
\def\vekL@stLine{vekL@stLine}
\def\xvekSp@lten#1;{\def\temp{#1}%
  \ifx\temp\vekL@stLine
  \else
    \ifnum\@VORNE=1\gdef\@VORNE{0}
    \else\@arraycr\fi%
    #1%
    \expandafter\xvekSp@lten
  \fi}
\newtheorem{thm}{Theorem}[section]
\newtheorem{cor}[thm]{Corollary}
\newtheorem{lem}[thm]{Lemma}
\newtheorem{prp}[thm]{Proposition}
\newtheorem{exa}[thm]{Example}
\newtheorem{rem}[thm]{Remark}
\theoremstyle{definition}
\newcommand{\scr}[1]{\mathscr #1}
\definecolor{wco}{rgb}{0.5,0.2,0.3}
\numberwithin{equation}{section} \theoremstyle{remark}
\newcommand{\ua}{\uparrow}
\title{{\bf    Harnack Inequalities for Functional SDEs
Driven by Subordinate Brownian Motions}\footnote{Supported in
 part by  NNSFC (11801406, 11831015).}
}
\author{
{\bf     Chang-Song Deng $^{a)}$, Xing Huang$^{b)}$,    }\\
\footnotesize{  a)School of Mathematics and Statistics, Wuhan University, Wuhan 430072, China}\\
\footnotesize{ dengcs@whu.edu.cn }\\
\footnotesize{  b)Center for Applied Mathematics, Tianjin University, Tianjin 300072, China}\\
\footnotesize{  xinghuang@tju.edu.cn}}
\begin{document}
\allowdisplaybreaks
\def\R{\mathbb R}  \def\ff{\frac} \def\ss{\sqrt} \def\B{\mathbf
B}
\def\N{\mathbb N} \def\kk{\kappa} \def\m{{\bf m}}
\def\ee{\varepsilon}\def\ddd{D^*}
\def\dd{\delta} \def\DD{\Delta} \def\vv{\varepsilon} \def\rr{\rho}
\def\<{\langle} \def\>{\rangle} \def\GG{\Gamma} \def\gg{\gamma}
  \def\nn{\nabla} \def\pp{\partial} \def\E{\mathbb E}
\def\d{\text{\rm{d}}} \def\bb{\beta} \def\aa{\alpha} \def\D{\scr D}
  \def\si{\sigma} \def\ess{\text{\rm{ess}}}
\def\beg{\begin} \def\beq{\begin{equation}}  \def\F{\scr F}
\def\Ric{\text{\rm{Ric}}} \def\Hess{\text{\rm{Hess}}}
\def\e{\text{\rm{e}}} \def\ua{\underline a} \def\OO{\Omega}  \def\oo{\omega}
 \def\tt{\tilde} \def\Ric{\text{\rm{Ric}}}
\def\cut{\text{\rm{cut}}} \def\P{\mathbb P} \def\ifn{I_n(f^{\bigotimes n})}
\def\C{\scr C}   \def\G{\scr G}   \def\aaa{\mathbf{r}}     \def\r{r}
\def\gap{\text{\rm{gap}}} \def\prr{\pi_{{\bf m},\varrho}}  \def\r{\mathbf r}
\def\Z{\mathbb Z} \def\vrr{\varrho} \def\ll{\lambda}
\def\L{\scr L}\def\Tt{\tt} \def\TT{\tt}\def\II{\mathbb I}
\def\i{{\rm in}}\def\Sect{{\rm Sect}}  \def\H{\mathbb H}
\def\M{\scr M}\def\Q{\mathbb Q} \def\texto{\text{o}} \def\LL{\Lambda}
\def\Rank{{\rm Rank}} \def\B{\scr B} \def\i{{\rm i}} \def\HR{\hat{\R}^d}
\def\to{\rightarrow}\def\l{\ell}\def\iint{\int}
\def\EE{\scr E}\def\no{\nonumber}
\def\A{\scr A}\def\V{\mathbb V}\def\osc{{\rm osc}}
\def\BB{\scr B}\def\Ent{{\rm Ent}}\def\3{\triangle}\def\H{\scr H}
\def\U{\scr U}\def\8{\infty}\def\1{\lesssim}\def\HH{\mathrm{H}}
 \def\T{\scr T}
  \newcommand\I{\mathds 1}
\maketitle

\begin{abstract}  Using coupling by change of measure and an approximation technique, Wang's Harnack inequalities are established for a class of functional SDEs driven by subordinate Brownian motions. The results cover the corresponding ones in the case without delay.
\end{abstract} \noindent
 AMS subject Classification:\  60H10, 60H15, 34K26, 39B72.   \\
\noindent
 Keywords:  Functional SDE, Harnack inequality, subordinate
 Brownian motion, coupling.
 \vskip 2cm

\section{Introduction}

The dimension-free Harnack inequality was firstly introduced by Wang \cite{FYW0} to derive the log-Sobolev inequality on Riemannian manifolds. As a weaker version of the
power-Harnack inequality, the log-Harnack inequality
was considered in \cite{RW} for
semi-linear SDEs. These two Harnack-type inequalities have been intensively investigated and applied for various finite- and
infinite-dimensional SDEs
and SPDEs driven by Brownian noise; we refer to
the monograph by F.-Y.\ Wang \cite{Wbook} for a systematic theory on dimension-free Harnack inequalities and applications.
For the functional SDEs and SPDEs, the Harnack inequalities are also investigated in \cite{BWY,BWY13}, see also \cite{SWY} for SDEs with non-Lipschitz coefficients and \cite{HW, HZ} for SDEs with Dini drifts. However, the noise in all the above results is assumed to
contain a Brownian motion part. The central aim
of this work is to establish Harnack inequalities
for functional SDEs driven by subordinate Brownian
motions, which form a very large class of
L\'{e}vy processes. It turns out that our results
cover the corresponding ones in the case without delay
derived by J.\ Wang and F.-Y.\ Wang \cite{WW}
(cf.\ \cite{Den14} for an improved estimate).

Fix a constant $r_0\geq0$. Denote by $\C$ the family
of all right continuous functions
$f:[-r_0,0]\to\mathbb{R}^{d}$ with left limits. To characterize the state space, equip $\C$ with the norm
$\|\cdot\|_2$ given by
$$\|\xi\|_2^2:=\int_{-r_0}^0|\xi(s)|^2\,\d s+|\xi(0)|^2
,\quad \xi\in\C.
$$
For $f:[-r_0,\infty)\to\mathbb{R}^{d}$,
we will denote $f_t \in \C$, $t\geq 0$, the
corresponding segment process, by
letting
$$f_t(s):=f(t+s),\quad s\in [-r_0,0].$$

Let $S=(S(t))_{t\geq0}$ be a subordinator (without killing), i.e.\ a nondecreasing L\'{e}vy process on $[0,\infty)$ starting
at $S(0)=0$. Due to the independent and stationary increments property, it is uniquely determined by the Laplace
transform
$$
    \E\,\e^{-uS(t)}=\e^{-t\phi(u)},\quad u>0,\,t\geq 0,
$$
where the characteristic (Laplace)
exponent $\phi:(0,\infty)\rightarrow(0,\infty)$ is a Bernstein
function with $\phi(0+):=\lim_{r\downarrow0}\phi(r)=0$,
i.e.\ a $C^\infty$-function such
that $(-1)^{n-1}\phi^{(n)}\geq0$ for all $n\in\N$.
Every such $\phi$ has a unique L\'{e}vy--Khintchine representation (cf. \cite[Theorem 3.2]{SSV})
\begin{equation}\label{bern}
    \phi(u)
    =\kappa u+\int_{(0,\infty)}\left(1-\e^{-ux}\right) \,\nu(\d x),\quad u>0,
\end{equation}
where $\kappa\geq0$ is the drift parameter and
$\nu$ is a L\'{e}vy measure on $(0,\infty)$
satisfying $$\int_{(0,\infty)}(1\wedge x)
\,\nu(\d x)<\infty.$$ It is clear that
$\tilde{\phi}(u):=\phi(u)-\kappa u$ is the Bernstein
function of the subordinator
$\tilde{S}(t):=S(t)-\kappa t$ having zero
drift and L\'{e}vy measure $\nu$.

Consider the following functional SDEs on $\mathbb{R}^{d}$:
\beq\label{E1}
\d X(t)=b(X(t))\,\d t+B(X_t)\,\d t +\d W(S(t)),
\end{equation}
where $W=(W(t))_{t\geq 0}$ is a $d$-dimensional standard Brownian motion with respect to a complete filtered probability space $(\OO, \F, \{\F_{t}\}_{t\ge 0}, \P)$, $S=(S(t))_{t\geq 0}$ is a subordinator with Bernstein function of the
form \eqref{bern} and independent of $W$, $b: \mathbb{R}^{d}\to \mathbb{R}^{d}$ is continuous,
and $B: \C\to \mathbb{R}^{d}$ is measurable.

We shall need the following conditions
on $b$ and $B$:
\beg{enumerate}
\item[\bf{(H)}] There exist  constants $K\in\mathbb{R}$ and $K_1\geq0$ such that
$$
\langle x-y,b(x)-b(y)\rangle\leq K|x-y|^2, \quad x,y\in\mathbb{R}^d,
$$
and
$$
 |B(\xi)-B(\eta)|\leq K_1\|\xi-\eta\|_{2},\quad \xi,\eta\in\C.
$$
\end{enumerate}

\begin{rem}\label{EAU}
The condition {\bf{(H)}} ensures the existence, uniqueness
and non-explosion of the solution to \eqref{E1}. Indeed,
letting $L(t)=W(S(t))$, $\hat{b}(t,x)=b(x+L(t))$ and
$\hat{B}(t,\xi)=B(\xi+L_t)$, one has
$$\langle x-y,\hat{b}(t,x)-\hat{b}(t,y)\rangle\leq K|x-y|^2, \quad x,y\in\mathbb{R}^d,t\geq0$$
and
$$ |\hat{B}(t,\xi)-\hat{B}(t,\eta)|\leq K_1\|\xi-\eta\|_{2},\quad \xi,\eta\in\C,t\geq0.$$
Then the following (functional) ordinary differential
equation
$$\d \hat{X}(t)=\hat{b}(t,\hat{X}(t))\,\d t+\hat{B}(t,\hat{X}_t)\,\d t$$
has a unique solution which does not explode in finite
time; setting $X(t):=\hat{X}(t)+L(t)$, we know that
\eqref{E1} has a unique non-explosive solution.
\end{rem}

For $\xi\in\C$, let $X_t^\xi$ be the solution to \eqref{E1} with $X_0=\xi$. Let $P_t$ be the semigroup associated to $X_t^\xi$, i.e.
$$P_t f(\xi)=\mathbb{E}f(X_t^\xi),\quad f\in\B_b(\C).$$

The remaining part of this paper is organized as follows.
In Section 2, we state our main results. By using the coupling by change of measure and an approximation technique, we
establish in Section 3 the Harnack inequalities
for functional SDEs
driven by non-random time-changed Brownian
motions. Section 4 is devoted to
the proofs of Theorem \ref{T3.2}
and Example \ref{ex1} presented in Section 2.

\section{Main results}

As usual, we make the convention that
$\frac10=\infty$ and $0\cdot\infty=0$.

\begin{thm}\label{T3.2} Assume
  {\bf (H)} and let $T>r_0$ and $S$ be a subordinator
  with Bernstein function $\phi$ of the form \eqref{bern}.

  \smallskip\noindent\textup{i)} \
  For any $\xi, \eta\in \C$ and
$f\in \B_b(\C)$ with $f\geq1$,
\beg{equation*}\beg{split}
P_T\log f(\eta)&\leq\log P_T f(\xi)+|\xi(0)-\eta(0)|^2\,
\mathbb{E}\left(\int_0^{T-r_0}\e^{-2Kt}\,\d S(t)\right)^{-1}
\\
&\quad+\frac{K_1^2}{\kappa}\left(r_0\|\xi-\eta\|_2^2+(T+1)\frac{\e^{2K(T-r_0)}-1}{2K}|\xi(0)-\eta(0)|^2\right).\end{split}\end{equation*}

\smallskip\noindent\textup{ii)} \
For any $p>1$, $\xi, \eta\in \C$ and
non-negative $f\in \B_b(\C)$,
\beg{equation*}\beg{split} (P_Tf)^p(\eta)& \le
P_Tf^p(\xi)\left(\E\exp\left[\frac{p}{(p-1)^2}\,|\xi(0)-\eta(0)|^2
\left(\int_0^{T-r_0}\e^{-2Kt}\,\d S(t)\right)^{-1}\right]
\right)^{p-1}\\
&\quad\times\exp\left[\frac{p}{p-1} \frac{K_1^2}{\kappa}\left(r_0\|\xi-\eta\|_2^2
+(T+1)\frac{\e^{2K(T-r_0)}-1}{2K}\,|\xi(0)-\eta(0)|^2\right)\right].
 \end{split}\end{equation*}
 \end{thm}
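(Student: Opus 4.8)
The plan is to first condition on the subordinator $S$, thereby reducing the statement to a functional SDE driven by a \emph{non-random} time-changed Brownian motion, for which the Harnack inequalities are to be established (this is the role of the preceding section), and only afterwards to integrate out the randomness of $S$. Since $W$ is independent of $S$, conditionally on the whole path of $S$ the noise $t\mapsto W(S(t))$ is a martingale with conditional quadratic variation $S(t)$; its continuous part carries a usable Gaussian component whose clock contains the absolutely continuous piece $\kappa\,\d t$ coming from the drift of $S$. Writing $P_T^S f(\xi):=\E[f(X_T^\xi)\mid S]$ and noting $P_Tf(\xi)=\E^S[P_T^Sf(\xi)]$ (with $\E^S$ the expectation over the law of $S$), I would first prove the two inequalities conditionally, keeping the random constant $\big(\int_0^{T-r_0}\e^{-2Kt}\,\d S(t)\big)^{-1}$ inside.

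For the conditional statement I would build a coupling $(X,Y)$ with $X_0=\xi$, $Y_0=\eta$ driven by the same noise, splitting $[0,T]$ into two phases, which is exactly why $T>r_0$ is required. On $[0,T-r_0]$ (Phase~1) I add to the equation for $Y$ a drift proportional to $Z(t):=X(t)-Y(t)$, weighted against the clock $\d S(t)$ and the factor $\e^{-2Kt}$, and tuned so that the \emph{position} coupling $Z(T-r_0)=0$ is achieved; the monotonicity bound $\langle x-y,b(x)-b(y)\rangle\le K|x-y|^2$ in {\bf(H)} is precisely what allows the weight $\e^{-2Kt}$ to absorb the drift, and the resulting change-of-measure cost is of order $|\xi(0)-\eta(0)|^2\big(\int_0^{T-r_0}\e^{-2Kt}\,\d S(t)\big)^{-1}$. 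On $[T-r_0,T]$ (Phase~2) I force $X\equiv Y$ so that the full segments coincide, $X_T=Y_T$; since the tails of the histories $X_t,Y_t$ still differ on $[t-r_0,T-r_0]$, an extra drift compensating $B(X_t)-B(Y_t)$ is needed. Because this compensation is a genuine $\d t$-drift, it must be paid for against the absolutely continuous part $\kappa\,\d t$ of the clock, which is the origin of the factor $K_1^2/\kappa$; combining the Lipschitz bound on $B$ with a Gronwall estimate for $\int|Z|^2$ over the two phases produces the term $r_0\|\xi-\eta\|_2^2+(T+1)\frac{\e^{2K(T-r_0)}-1}{2K}|\xi(0)-\eta(0)|^2$.

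The hard part will be making this change of measure rigorous, and this is where the approximation technique enters. Two difficulties coincide: $b$ is only continuous and monotone rather than Lipschitz, and the clock $S$ is not absolutely continuous, so $W(S(\cdot))$ has jumps and the naive Girsanov density is not immediately justified. The plan is to mollify $b$ into Lipschitz approximations $b_n$ so that the coupling and its exponential martingale are well defined, to verify Novikov-type integrability and that the density is a true martingale on each phase, to derive the conditional Harnack inequalities for the approximations, and then to pass to the limit using the existence, uniqueness and non-explosion recorded in Remark~\ref{EAU}. Throughout one must apply Girsanov only along the continuous martingale part that carries the usable noise, leaving the jump part of $W(S(\cdot))$ untouched.

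Finally I would remove the conditioning. For i), taking $\E^S$ of the conditional log-Harnack and using concavity of $\log$ (Jensen's inequality gives $\E^S\log P_T^Sf(\xi)\le\log\E^S P_T^Sf(\xi)=\log P_Tf(\xi)$) converts the random constant into its expectation $\E\big(\int_0^{T-r_0}\e^{-2Kt}\,\d S(t)\big)^{-1}$, while the $S$-independent delay term passes through unchanged. For ii), after taking the $p$-th root of the conditional power-Harnack (whose prefactor is the usual $\frac{p}{p-1}$) and integrating over $S$, Hölder's inequality with conjugate exponents $p$ and $\frac{p}{p-1}$ separates $(P_Tf^p(\xi))^{1/p}$ from the exponential moment of the random constant; raising back to the $p$-th power then yields exactly the factor $\big(\E\exp[\frac{p}{(p-1)^2}|\xi(0)-\eta(0)|^2\big(\int_0^{T-r_0}\e^{-2Kt}\,\d S(t)\big)^{-1}]\big)^{p-1}$ together with the deterministic factor $\exp[\frac{p}{p-1}\frac{K_1^2}{\kappa}(\cdots)]$.
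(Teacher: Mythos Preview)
Your high-level plan coincides with the paper's: first condition on the path $\ell=S(\cdot)$ and prove Harnack inequalities for the semigroup $P_T^\ell$ associated with the functional SDE driven by $W(\ell(\cdot))$, then remove the conditioning by Jensen's inequality for~i) and H\"older's inequality for~ii); this last step is exactly the content of the paper's Section~4. The construction of a coupling $(X,Y)$ that forces $X(t)=Y(t)$ for $t\ge T-r_0$ (so that the full segments agree at time $T$), and the identification of the two cost contributions --- the position-coupling cost $|\xi(0)-\eta(0)|^2\big(\int_0^{T-r_0}\e^{-2Kt}\,\d\ell(t)\big)^{-1}$ and the $B$-correction cost $K_1^2/\kappa$ times a segment integral --- are also the right ideas.

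Where your plan diverges from the paper, and where it would fail as written, is the treatment of the non-absolutely-continuous clock. You propose to weight the Phase-1 coupling drift against the full $\d S(t)$ while ``applying Girsanov only along the continuous martingale part \dots\ leaving the jump part of $W(S(\cdot))$ untouched.'' But conditionally on $\ell$, the continuous martingale part of $W(\ell(\cdot))$ has quadratic variation only $\kappa\,\d t$ (the continuous part of $\ell$); a Girsanov shift of that piece alone can absorb only $\d t$-drifts and cannot swallow a drift against $\d\ell(t)$, so you would end up with $\big(\kappa\int_0^{T-r_0}\e^{-2Kt}\,\d t\big)^{-1}$ rather than the sharper $\big(\int_0^{T-r_0}\e^{-2Kt}\,\d S(t)\big)^{-1}$. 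The paper resolves this by an additional approximation layer you do not mention: for fixed deterministic $\ell$ it first replaces $\ell$ by a strictly increasing, absolutely continuous $\ell^\varepsilon$ with $(\ell^\varepsilon)'>\kappa$ (so that $W(\ell^\varepsilon(\cdot)-\ell^\varepsilon(0))$ is a genuine time-changed Brownian motion with no jumps), carries out the coupling and Girsanov argument for $P_T^{\ell^\varepsilon}$ (Lemma~\ref{Ptl}), and then lets $\varepsilon\downarrow0$ (Lemma~\ref{approxi}) \emph{before} dealing with the non-Lipschitz $b$ via Yoshida approximation. A secondary point: in the paper's coupling $Y$ is driven by $b(Y)$ but by $B(X_t)$ (not $B(Y_t)$) on the \emph{entire} interval $[0,T]$, not only on your Phase~2; this is what keeps the differential inequality for $|X-Y|$ free of the segment norm and guarantees $\tau\le T-r_0$, while the $B$-correction cost is accordingly integrated over all of $[0,T]$.
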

 \begin{rem}\label{nd} If $B=0$, then we can
 choose $r_0=0$ and $K_1=0$, and thus the assertions in Theorem \ref{T3.2} reduce to the
 ones derived in \cite{WW} for the case without delay.
 \end{rem}

For a measurable space $(E,\scr F)$, let $\scr P(E)$ denote
 the family of all probability measures on $(E,\F)$.
 For $\mu,\nu\in\scr P(E)$, the entropy $\Ent(\nu|\mu)$
 is defined by
 $$\Ent(\nu|\mu):= \beg{cases} \int (\log \ff{\d\nu}{\d\mu})\,\d\nu, \ &\text{if}\ \nu\ \text{ is\ absolutely\ continuous\ with\ respect\ to}\ \mu,\\
 \infty,\ &\text{otherwise;}\end{cases}$$
 the total variation distance
 $\|\mu-\nu\|_{\operatorname{var}}$ is defined by
$$\|\mu-\nu\|_{\operatorname{var}} := \sup_{A\in\F}|\mu(A)-\nu(A)|.$$ By Pinsker's inequality (see \cite{CK, Pin}),
\beq\label{ETX} \|\mu-\nu\|_{\operatorname{var}}^2\le \ff 1 2 \Ent(\nu|\mu),\quad \mu,\nu\in \scr P(E).\end{equation}

For $\xi\in\C$, let $P_T(\xi,\cdot)$ be 
the distribution of $X_T^\xi$. 
The following corollary is a direct consequence
of Theorem \ref{T3.2},
see \cite[Theorem 1.4.2]{Wbook} for the proof; we also refer
to \cite[Subsection 1.4.1]{Wbook} for an in-depth
explanation of the applications of the Harnack inequalities.

\begin{cor}\label{density0} Let the assumptions in Theorem \ref{T3.2} hold.
 Then the following assertions hold.

\smallskip\noindent\textup{i)} \
For any $\xi,\eta\in\C$, $P_T(\xi,\cdot)$ is equivalent to $P_T(\eta,\cdot)$ and
\begin{align*}\Ent\big(P_{T}(\xi,\cdot)|P_{T}(\eta,\cdot)\big)
&\leq |\xi(0)-\eta(0)|^2\mathbb{E}\left(\int_0^{T-r_0}\e^{-2Kt}\,\d S(t)\right)^{-1}\\
&\quad+\frac{K_1^2}{\kappa}
\left(r_0\|\xi-\eta\|_2^2+(T+1)\frac{\e^{2K(T-r_0)}-1}{2K}\,
|\xi(0)-\eta(0)|^2\right),
\end{align*}
which together with Pinsker's inequality \eqref{ETX} implies that
\begin{align*}
2\|P_T(\xi,\cdot)-P_T(\eta,\cdot)\|_{\operatorname{var}}^2
&\le  |\xi(0)-\eta(0)|^2\mathbb{E}\left(\int_0^{T-r_0}\e^{-2Kt}\,\d S(t)\right)^{-1}\\
&\quad
+\frac{K_1^2}{\kappa}\left(r_0\|\xi-\eta\|_2^2+(T+1)\frac{\e^{2K(T-r_0)}-1}{2K}|\xi(0)-\eta(0)|^2\right). \end{align*}

\smallskip\noindent\textup{ii)} \
For any $p>1$ and $\xi,\eta\in\C$,
\begin{align*}&P_T\left\{\left(\frac{\d P_T(\xi,\cdot)}{\d P_T(\eta,\cdot)}\right)^{1/(p-1)}\right\}(\xi)\leq  \E\exp\left[\frac{p}{(p-1)^2}\,|\xi(0)-\eta(0)|^2\left(\int_0^{T-r_0}\e^{-2Kt}\,\d S(t)\right)^{-1}\right]\\
&\qquad\qquad\quad\times\exp\left[\frac{p}{(p-1)^2} \frac{K_1^2}{\kappa}\left(r_0\|\xi-\eta\|_2^2
+(T+1)\frac{\e^{2K(T-r_0)}-1}{2K}\,|\xi(0)-\eta(0)|^2\right)\right].
\end{align*}
\end{cor}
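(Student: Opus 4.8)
The plan is to read off both assertions from the two Harnack inequalities of Theorem~\ref{T3.2} by the standard duality arguments behind \cite[Theorem~1.4.2]{Wbook}; the one observation that makes the bookkeeping painless is that each right-hand constant in Theorem~\ref{T3.2} is \emph{symmetric} in its two path arguments, since it depends on the pair only through $|\xi(0)-\eta(0)|^2$ and $\|\xi-\eta\|_2^2$. Throughout I write $\mu:=P_T(\xi,\cdot)$ and $\nu:=P_T(\eta,\cdot)$, and let $\Phi(\xi,\eta)$ and $\Psi(\xi,\eta)$ abbreviate the right-hand sides of parts i) and ii) of Theorem~\ref{T3.2}, so that $\Phi(\xi,\eta)=\Phi(\eta,\xi)$ and $\Psi(\xi,\eta)=\Psi(\eta,\xi)$. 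First I would settle the equivalence $\mu\sim\nu$: applying part ii) to $f=\I_A$ for measurable $A\subseteq\C$ gives $(\nu(A))^p\le\mu(A)\,\Psi(\xi,\eta)$ with $\Psi(\xi,\eta)<\8$, so $\mu(A)=0$ forces $\nu(A)=0$; hence $\nu\ll\mu$, and exchanging $\xi$ and $\eta$ (legitimate by symmetry of $\Psi$) yields $\mu\ll\nu$. In particular the Radon--Nikodym derivatives in the statement are well defined.

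For part i) I would invoke the Donsker--Varadhan variational formula
\[
\Ent(\mu|\nu)=\sup\Big\{\mu(g)-\log\nu(\e^{g}):\ g\in\B_b(\C)\Big\}.
\]
Exchanging the roles of $\xi$ and $\eta$ in the log-Harnack inequality and taking $f=\e^{g}$ (which may be assumed $\ge1$, since adding a constant to $g$ leaves $\mu(g)-\log\nu(\e^{g})$ unchanged) gives $\mu(g)-\log\nu(\e^{g})\le\Phi(\eta,\xi)=\Phi(\xi,\eta)$ for every $g\in\B_b(\C)$. Passing to the supremum produces $\Ent(\mu|\nu)\le\Phi(\xi,\eta)$, the claimed entropy bound; the total-variation estimate is then immediate from Pinsker's inequality \eqref{ETX} together with the symmetry $\|\mu-\nu\|_{\operatorname{var}}=\|\nu-\mu\|_{\operatorname{var}}$. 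Routing through the variational formula has the advantage that it only ever tests against \emph{bounded} $g$, so no approximation of the possibly unbounded density is needed here.

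For part ii) I would again exchange $\xi$ and $\eta$ in the power-Harnack inequality, writing it as $(\mu(f))^p\le\nu(f^p)\,\Psi(\xi,\eta)$ for nonnegative $f\in\B_b(\C)$, and set $\rho:=\d\nu/\d\mu$. The target quantity is $\mu(\rho^{-1/(p-1)})$, which equals $P_T\{(\d P_T(\xi,\cdot)/\d P_T(\eta,\cdot))^{1/(p-1)}\}(\xi)$ because $\rho^{-1}=\d\mu/\d\nu$. The step demanding the most care is the accompanying duality argument, since the extremizing choice $f\propto\rho^{-1/(p-1)}$ is generally unbounded and thus lies outside the class $\B_b(\C)$ for which Theorem~\ref{T3.2} is available; to stay within that class I would test against the truncations $f_n:=\rho^{-1/(p-1)}\wedge n$. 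A direct pointwise check gives $f_n^{p}\rho\le f_n$, so $\nu(f_n^p)=\mu(f_n^p\rho)\le\mu(f_n)$ and the Harnack inequality yields $(\mu(f_n))^p\le\mu(f_n)\,\Psi(\xi,\eta)$, whence $\mu(f_n)\le\Psi(\xi,\eta)^{1/(p-1)}$; letting $n\uparrow\8$ and using monotone convergence gives $\mu(\rho^{-1/(p-1)})\le\Psi(\xi,\eta)^{1/(p-1)}$, which also shows this integral is finite. A routine check of exponents then confirms that $\Psi(\xi,\eta)^{1/(p-1)}$ is exactly the product of the expectation factor and the exponential factor displayed in part ii), completing the proof. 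Everything beyond the truncation is a direct substitution into the inequalities already established.
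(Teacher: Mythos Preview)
Your argument is correct and is precisely the standard duality route the paper has in mind: the paper does not give its own proof but simply cites \cite[Theorem~1.4.2]{Wbook}, whose content is exactly the equivalence-from-power-Harnack, entropy-from-log-Harnack (via the Donsker--Varadhan formula), and density-moment-from-power-Harnack arguments you wrote out, together with the symmetry of the cost functions in $(\xi,\eta)$. The one small caveat is that your claim ``$\Psi(\xi,\eta)<\infty$'' is not guaranteed in general (the expectation factor may diverge, and the $K_1^2/\kappa$ term is infinite when $\kappa=0$ and $K_1>0$); in those cases the inequalities in parts i) and ii) are trivially true, while the equivalence $P_T(\xi,\cdot)\sim P_T(\eta,\cdot)$ should be understood under the implicit finiteness assumption --- the paper's reference to \cite{Wbook} carries the same tacit reading.
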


\begin{exa}\label{ex1}
Assume that {\bf(H)} holds with $K=0$.
Let $T> r_0$, and $S$ be a subordinator
  with Bernstein function $\phi(u)\geq\kappa u+cu^\alpha$
    \textup{(}$\kappa\geq0$, $c>0$, $0<\alpha<1$\textup{)}.

    \smallskip\noindent\textup{i)} \
    There exists $C=C(\alpha,c)>0$ such that
    for any $\xi, \eta\in \C$ and
$f\in \B_b(\C)$ with $f\geq1$,
\begin{align*}
P_T\log f(\eta)&\leq\log P_T f(\xi)+
\frac{C|\xi(0)-\eta(0)|^2}{[\kappa(T-r_0)]
\vee(T-r_0)^{1/\alpha}}\\
 &\quad+\frac{K_1^2}{\kappa}\left(r_0\|\xi-\eta\|_2^2+(T+1)(T-r_0)|\xi(0)-\eta(0)|^2\right).
\end{align*}

\smallskip\noindent\textup{ii)} \
If in addition $1/2<\alpha<1$,
then there exists $C=C(\alpha,c)>0$
such that for any $p>1$, $\xi, \eta\in \C$ and
non-negative $f\in \B_b(\C)$,
\beg{equation*}\beg{split}
&(P_Tf)^p(\eta) \le
P_Tf^p(\xi)\cdot\exp\left[\frac{p}{p-1} \frac{K_1^2}{\kappa}\left(r_0\|\xi-\eta\|_2^2
+(T+1)(T-r_0)|\xi(0)-\eta(0)|^2\right)\right]\\
&\quad\times
\exp\left[
C\left(
\frac{p|\xi(0)-\eta(0)|^2}{(p-1)(T-r_0)^{1/\alpha}}
+\frac{\left[p|\xi(0)-\eta(0)|^2\right]^{\alpha/(2\alpha-1)}}
{\left[(p-1)(T-r_0)\right]^{1/(2\alpha-1)}}
\right)
\wedge
\frac{p|\xi(0)-\eta(0)|^2}{(p-1)\kappa(T-r_0)}
\right].
 \end{split}\end{equation*}
\end{exa}

\section{Harnack inequalities under deterministic time-change}

Let $\ell:[0,\infty)\rightarrow[0,\infty)$ be
a sample path of $S$ (with Bernstein function $\phi$ of
the form \eqref{bern}), which is a non-decreasing and
c\`{a}dl\`{a}g function with $\ell(0)=0$.
By {\bf(H)} and the same explanation as in Remark \ref{EAU}, for any $\xi\in\C$, the following functional SDE has a unique non-explosive
solution with $X_0^{\ell}=\xi$:
\begin{equation}\label{jg4dgv}
\d X^\ell(t)=b(X^\ell(t))\,\d t+B(X^\ell_t)\,\d t +\d W(\ell(t)).
\end{equation}
We denote the solution by $X_t^{\ell,\xi}$. Let
$$P^\ell_t f(\xi)=\mathbb{E}f(X_t^{\ell,\xi}),\quad t\geq0,f\in\B_b(\C),\xi\in\C.$$

\begin{prp}\label{dfr3s}
Assume {\bf(H)} and  let $T>r_0$.

\smallskip\noindent\textup{i)} \
For any $\xi, \eta\in \C$ and
$f\in \B_b(\C)$ with $f\geq1$,
\begin{align*}
P_T^{\ell} \log f(\eta)
&\leq \log P_T^{\ell} f(\xi)+
|\xi(0)-\eta(0)|^2\left(
\int_0^{T-r_0}\e^{-2Kt}\,\d \ell(t)
\right)^{-1}\\
&\quad+\frac{K_1^2}{\kappa}\left(r_0\|\xi-\eta\|_2^2+(T+1)\frac{\e^{2K(T-r_0)}-1}{2K}|\xi(0)-\eta(0)|^2\right).
\end{align*}

\smallskip\noindent\textup{ii)} \
For any $p>1$, $\xi, \eta\in \C$ and
non-negative $f\in \B_b(\C)$,
\begin{align*}
\left(P_T^{\ell}  f(\eta)\right)^p&
\leq P_T^{\ell} f^p(\xi)
\exp\left[\frac{p}{p-1}|\xi(0)-\eta(0)|^2\left(
\int_0^{T-r_0}\e^{-2Kt}\,\d \ell(t)
\right)^{-1}\right]\\
&\quad\times\exp\left[\frac{p}{p-1} \frac{K_1^2}{\kappa}\left(r_0\|\xi-\eta\|_2^2
+(T+1)\frac{\e^{2K(T-r_0)}-1}{2K}|\xi(0)-\eta(0)|^2\right)\right].
\end{align*}
\end{prp}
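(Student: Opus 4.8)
The plan is to establish both inequalities by Wang's coupling by change of measure, carried out for the fixed deterministic time change $\ell$, combined with an approximation step that first reduces to a smooth, strictly increasing $\ell$ (for which $W\circ\ell$ is a nondegenerate continuous Gaussian martingale) and then passes to the limit. Write $X=X^{\ell,\xi}$ for the solution of \eqref{jg4dgv} from $\xi$. I would build a coupling process $Y$ started at $\eta$, driven by the same noise $W(\ell(\cdot))$, together with a new measure $\Q$ under which $Y$ is again a solution of \eqref{jg4dgv} from $\eta$; the construction forces the segments to satisfy $X_T=Y_T$, so that $\E_\Q g(Y_T)=P_T^\ell g(\eta)$ while $Y_T=X_T$ holds pointwise under $\P$.

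For the coupling, set $Z:=X-Y$ and $\psi(t):=\int_t^{T-r_0}\e^{-2Ks}\,\d\ell(s)$, and add to the equation for $Y$ the drift $\{B(X_t)-B(Y_t)\}\,\d t$ on $[0,T]$ together with the extra term $\frac{\e^{-2Kt}Z(t)}{\psi(t)}\,\d\ell(t)$ on $[0,T-r_0]$. The first piece exactly cancels the delay term in $\d Z$, leaving the clean equation $\d Z(t)=\{b(X(t))-b(Y(t))\}\,\d t-\frac{\e^{-2Kt}Z(t)}{\psi(t)}\,\d\ell(t)$ on $[0,T-r_0]$. The monotonicity in {\bf(H)} then gives, for $V(t):=\e^{-2Kt}|Z(t)|^2$, the differential inequality $\d V\le 2V\,\d\log\psi(t)$, which integrates to
\[
V(t)\le V(0)\,(\psi(t)/\psi(0))^2\longrightarrow0\quad\text{as }t\uparrow T-r_0,
\]
so $Z(T-r_0)=0$. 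On $[T-r_0,T]$ only the drift $\{B(X_t)-B(Y_t)\}\,\d t$ is kept, which is exactly what forces $\d Z=0$ there (the $b$-terms already agree since the positions coincide); hence $Z\equiv0$ on $[T-r_0,T]$ and $X_T=Y_T$.

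The change of measure removing the total added drift $\gamma\,\d\ell$ is a Girsanov/Cameron--Martin shift of the time-changed Gaussian noise, with density $R=\exp(-\int_0^T\langle\gamma,\d W(\ell)\rangle-\tfrac12\int_0^T|\gamma|^2\,\d\ell)$ and $\Ent(\Q|\P)=\tfrac12\E_\Q\int_0^T|\gamma|^2\,\d\ell$. Splitting $\gamma$ into its position part $g$ and its delay part $h:=(B(X_t)-B(Y_t))/\ell'$ via $\tfrac12|\gamma|^2\le|g|^2+|h|^2$ decouples the two contributions. The position term integrates, using $V(t)\le V(0)(\psi(t)/\psi(0))^2$, to a bound $|\xi(0)-\eta(0)|^2/\int_0^{T-r_0}\e^{-2Kt}\,\d\ell(t)$. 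For the delay term one uses the nondegeneracy $\d\ell(t)\ge\kappa\,\d t$ (valid since $\ell$ is a subordinator path with drift $\kappa$) to get $\int_0^T|h|^2\,\d\ell\le\frac1\kappa\int_0^T|B(X_t)-B(Y_t)|^2\,\d t\le\frac{K_1^2}{\kappa}\int_0^T\|X_t-Y_t\|_2^2\,\d t$; inserting the pointwise bound $|Z(t)|^2\le\e^{2Kt}|\xi(0)-\eta(0)|^2$ and carrying out the Fubini bookkeeping over the segment norm produces $\frac{K_1^2}{\kappa}(r_0\|\xi-\eta\|_2^2+(T+1)\frac{\e^{2K(T-r_0)}-1}{2K}|\xi(0)-\eta(0)|^2)$. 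With these two bounds in hand, part i) follows from the entropy inequality $\E_\Q\log f(Y_T)\le\log\E_\P f(X_T)+\Ent(\Q|\P)$ (using $Y_T=X_T$ and $f\ge1$), while part ii) follows from Hölder's inequality $(\E_\Q f(Y_T))^p\le(\E_\P f(X_T)^p)(\E_\P R^{p/(p-1)})^{p-1}$ together with the standard exponential-moment estimate for $R$, which with $q=p/(p-1)$ turns the deterministic-in-$\ell$ cost bound into exactly the two $\exp$-factors displayed in ii).

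I expect the main obstacle to be making the change of measure rigorous for a genuine càdlàg path $\ell$: then $W(\ell(\cdot))$ carries Gaussian jumps, and although shifting the added drift is still a Cameron--Martin shift of the underlying Gaussian family, it is cleanest to first prove everything for smooth strictly increasing $\ell_n$ with $\ell_n'\ge\kappa$, where classical Girsanov and all the estimates above apply verbatim, and then let $\ell_n\to\ell$; the bounds survive the limit because they depend on $\ell$ only through $\int_0^{T-r_0}\e^{-2Kt}\,\d\ell(t)$ and the pointwise estimate for $|Z|$, both stable under $\ell_n\to\ell$. The second delicate point is the delay compensation on $[T-r_0,T]$, whose Girsanov cost is finite only through $\d\ell\ge\kappa\,\d t$; this is precisely the mechanism generating the factor $1/\kappa$ and confining the delay contribution to the regime $\kappa>0$ (or $K_1=0$, where it disappears by the convention $0\cdot\infty=0$).
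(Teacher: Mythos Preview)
Your coupling-by-change-of-measure argument is essentially the paper's strategy: the authors also replace $B(Y_t)$ by $B(X_t)$ in the equation for $Y$, add a position-forcing term against $\d\ell^\varepsilon$, show the coupling succeeds by $T-r_0$, bound the Girsanov cost using $(\ell^\varepsilon)'>\kappa$ for the delay piece, and carry out the same segment-norm bookkeeping to get the displayed constants. Your forcing term $\frac{\e^{-2Kt}Z(t)}{\psi(t)}\,\d\ell(t)$ differs from theirs, which is $\lambda(t)\,\tfrac{Z}{|Z|}\,|\xi(0)-\eta(0)|\,\d\ell^\varepsilon(t)$ with constant magnitude and cut off at the coupling time $\tau$; but both yield the identical pointwise bound $|Z(t)|\le \e^{Kt}|\xi(0)-\eta(0)|\,\psi(t)/\psi(0)$ and hence the same cost estimate, so this is only a cosmetic difference (theirs avoids the apparent singularity at $t=T-r_0$, yours avoids tracking $\tau$).

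There is one genuine gap in your limit step. You argue that ``the bounds survive the limit $\ell_n\to\ell$'' because the right-hand side depends on $\ell$ only through $\int_0^{T-r_0}\e^{-2Kt}\,\d\ell(t)$; but you also need the \emph{left-hand side} to converge, i.e.\ $P_T^{\ell_n}f\to P_T^\ell f$ for $f\in C_b(\C)$, which means $X_T^{\ell_n,\xi}\to X_T^{\ell,\xi}$ in $\C$. Under {\bf(H)} the drift $b$ is only one-sided Lipschitz and continuous, and the straightforward Gronwall estimate comparing $X^{\ell_n}$ with $X^{\ell}$ (subtract the equations, take absolute values) requires a two-sided Lipschitz bound on $b$; the one-sided condition cannot be exploited directly here because the noise increments $W(\ell_n(t))-W(\ell(t))$ are not differentiable in $t$. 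The paper handles this by a further approximation: replace $b$ by its Yosida regularization $b^{(\varepsilon)}$ (globally Lipschitz, still satisfying the same one-sided bound with constant $K$), prove the proposition for $b^{(\varepsilon)}$, and then let $\varepsilon\downarrow0$ using $|\tilde b^{(\varepsilon)}|\le|\tilde b|$ and $\tilde b^{(\varepsilon)}\to\tilde b$ together with a segment-norm Gronwall argument. Without this second layer of approximation your passage to the limit is incomplete when $b$ is not globally Lipschitz.
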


Following the line of \cite{Den14, DS16, WW, WZ15, Zha13},
for $\varepsilon\in(0,1)$, consider the
following regularization of $\ell$:
$$\ell^\varepsilon(t):=\frac{1}{\varepsilon}
\int_{t}^{t+\varepsilon}\ell(s)\,\d s+\varepsilon t
=\int_0^1\ell(\varepsilon s+t)\,\d s+\varepsilon t,
\quad t\geq0.$$
It is clear that, for each $\varepsilon\in(0,1)$,
the function $\ell^\varepsilon$ is absolutely
continuous, strictly increasing and satisfies
for any $t\geq0$
\begin{equation}\label{approximation}
    \ell^\varepsilon(t)\downarrow\ell(t)\quad
    \text{as $\varepsilon\downarrow0$}.
\end{equation}
For $\xi\in\C$, let $X_t^{\ell^\varepsilon,\xi}$ be the solution to the following functional SDE with initial value $\xi$:
$$
\d X^{\ell^\varepsilon,\xi}(t)=b(X^{\ell^\varepsilon,\xi}(t))\,\d t+B(X_t^{\ell^\varepsilon,\xi})\,\d t+\d W(\ell^\varepsilon(t)-\ell^\varepsilon(0)).
$$
The associated semigroup is
denoted by $P_t^{\ell^\varepsilon}$.
Note that this SDE is indeed driven by Brownian motions and
thus the method of coupling and Girsanov's transformation
can be used to establish the dimension-free
Harnack inequalities for $P_t^{\ell^\varepsilon}$.

\begin{lem}\label{Ptl}
Fix $\varepsilon\in(0,1)$, assume {\bf(H)} and  let $T>r_0$.

\smallskip\noindent\textup{i)} \
For any $\xi, \eta\in \C$ and
$f\in \B_b(\C)$ with $f\geq1$,
\begin{align*}
P_T^{\ell^\varepsilon} \log f(\eta)
&\leq \log P_T^{\ell^\varepsilon} f(\xi)+
|\xi(0)-\eta(0)|^2\left(
\int_0^{T-r_0}\e^{-2Kt}\,\d \ell^\varepsilon(t)
\right)^{-1}\\
&\quad+\frac{K_1^2}{\kappa}\left(r_0\|\xi-\eta\|_2^2+(T+1)\frac{\e^{2K(T-r_0)}-1}{2K}|\xi(0)-\eta(0)|^2\right).
\end{align*}

\smallskip\noindent\textup{ii)} \
For any $p>1$, $\xi, \eta\in \C$ and
non-negative $f\in \B_b(\C)$,
\begin{align*}
\left(P_T^{\ell^\varepsilon}  f(\eta)\right)^p&
\leq P_T^{\ell^\varepsilon} f^p(\xi)
\exp\left[\frac{p}{p-1}|\,\xi(0)-\eta(0)|^2\left(
\int_0^{T-r_0}\e^{-2Kt}\,\d \ell^\varepsilon(t)
\right)^{-1}\right]\\
&\quad\times\exp\left[\frac{p}{p-1} \frac{K_1^2}{\kappa}\left(r_0\|\xi-\eta\|_2^2
+(T+1)\frac{\e^{2K(T-r_0)}-1}{2K}
|\xi(0)-\eta(0)|^2\right)\right].
\end{align*}
\end{lem}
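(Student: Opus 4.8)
The plan is to use that, for fixed $\varepsilon\in(0,1)$, the equation defining $X^{\ell^\varepsilon,\xi}$ is driven by a genuine time-changed Brownian motion whose quadratic variation is absolutely continuous, so it can be rewritten as a classical It\^o functional SDE to which coupling by change of measure applies. First I would set $\sigma_\varepsilon(t):=\sqrt{(\ell^\varepsilon)'(t)}$ and note that $W(\ell^\varepsilon(\cdot)-\ell^\varepsilon(0))$ is a continuous Gaussian martingale with deterministic absolutely continuous bracket $\ell^\varepsilon(t)-\ell^\varepsilon(0)$, hence equals $\int_0^\cdot\sigma_\varepsilon(s)\,\d\tilde B(s)$ for a standard Brownian motion $\tilde B$. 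The decisive quantitative input is the lower bound $(\ell^\varepsilon)'(t)=\frac1\varepsilon(\ell(t+\varepsilon)-\ell(t))+\varepsilon\ge\kappa+\varepsilon\ge\kappa$, which comes from writing the sample path as $\ell(t)=\kappa t+\tilde\ell(t)$ with $\tilde\ell$ nondecreasing; it gives $\sigma_\varepsilon(t)^{-2}\le\kappa^{-1}$ and is precisely what produces the factor $1/\kappa$ in the functional part of the bound. The equation then reads $\d X(t)=[b(X(t))+B(X_t)]\,\d t+\sigma_\varepsilon(t)\,\d\tilde B(t)$, uniformly non-degenerate.

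Next I would construct the coupling in two phases, using that $T>r_0$. Let $X$ start from $\xi$ and drive a second process $Y$ from $\eta$ by the same $\tilde B$ with an added drift $\sigma_\varepsilon(t)v(t)\,\d t$, and write $Z:=X-Y$. On $[0,T-r_0]$ I would take the restoring part of the drift equal to $\frac{\e^{-2Kt}\sigma_\varepsilon(t)^2}{h(t)}Z(t)$ with $h(t):=\int_t^{T-r_0}\e^{-2Ks}\,\d\ell^\varepsilon(s)$, together with a term cancelling $B(X_t)-B(Y_t)$. The weight $\e^{-2Kt}$ is chosen so that the Lyapunov function $|Z(t)|^2\e^{-2Kt}h(0)^2/h(t)^2$ is nonincreasing, by the one-sided bound $\langle x-y,b(x)-b(y)\rangle\le K|x-y|^2$; this both forces $Z(T-r_0)=0$ (since $h(T-r_0)=0$) and yields $|Z(t)|^2\le|\xi(0)-\eta(0)|^2\e^{2Kt}h(t)^2/h(0)^2$. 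On $[T-r_0,T]$ I would take $v$ so that $Z\equiv0$; this still needs the drift $B(X_t)-B(Y_t)$, which does not vanish because the segments $X_t,Y_t$ still carry the pre-coupling values on $[t-r_0,T-r_0]$. By construction $X_T=Y_T$ in $\C$, and Girsanov's theorem furnishes a probability $\Q=R\,\P$ under which $Y$ solves the original equation from $\eta$, so that $P_T^{\ell^\varepsilon}f(\eta)=\E_\Q f(Y_T)=\E_\Q f(X_T)$ with $\log R=-\int_0^T\langle v,\d\tilde B\rangle-\frac12\int_0^T|v|^2\,\d t$.

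From here both inequalities follow by standard measure-theoretic steps, reducing everything to the energy $\int_0^T|v|^2\,\d t$. For i) I would use the variational formula $\E_\Q\log g\le\log\E_\P g+\Ent(\Q|\P)$ with $g=f(X_T)\ge1$, together with $\Ent(\Q|\P)=\frac12\E_\Q\int_0^T|v|^2\,\d t$; for ii) I would apply H\"older with exponents $p,\,p/(p-1)$ to $\E_\P[R\,f(X_T)]$ and bound the density moment $\E_\P R^{p/(p-1)}$ by completing the square in the exponential martingale. The energy then splits: the restoring part on $[0,T-r_0]$ integrates, via $|v_{\mathrm{restore}}(t)|^2\le|\xi(0)-\eta(0)|^2\e^{-2Kt}\sigma_\varepsilon(t)^2h(0)^{-2}$ and $\int_0^{T-r_0}\e^{-2Kt}\sigma_\varepsilon(t)^2\,\d t=h(0)$, to the leading term $|\xi(0)-\eta(0)|^2\big(\int_0^{T-r_0}\e^{-2Kt}\,\d\ell^\varepsilon(t)\big)^{-1}$, while the $B$-cancellation part contributes $\le\frac{K_1^2}{\kappa}\int_0^T\|X_t-Y_t\|_2^2\,\d t$ by $|B(X_t)-B(Y_t)|\le K_1\|X_t-Y_t\|_2$ and $\sigma_\varepsilon^{-2}\le\kappa^{-1}$.

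The main obstacle is this last, functional, estimate. Writing $\|X_t-Y_t\|_2^2=\int_{t-r_0}^t|Z(u)|^2\,\d u+|Z(t)|^2$ (with $Z(u)=\xi(u)-\eta(u)$ for $u<0$), a Fubini interchange bounds $\int_0^T\|X_t-Y_t\|_2^2\,\d t$ by $r_0\int_{-r_0}^0|\xi-\eta|^2\,\d u+(r_0+1)\int_0^{T-r_0}|Z(t)|^2\,\d t$; inserting the Lyapunov bound $|Z(t)|^2\le|\xi(0)-\eta(0)|^2\e^{2Kt}$ and $\int_0^{T-r_0}\e^{2Kt}\,\d t=\frac{\e^{2K(T-r_0)}-1}{2K}$, and using $r_0<T$, yields exactly $r_0\|\xi-\eta\|_2^2+(T+1)\frac{\e^{2K(T-r_0)}-1}{2K}|\xi(0)-\eta(0)|^2$. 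A secondary technical point is the legitimacy of the change of measure, i.e.\ $\E_\P R=1$ and finiteness of $\E_\P R^{p/(p-1)}$; these follow from the uniform ellipticity $\sigma_\varepsilon\ge\sqrt\kappa$, the global Lipschitz bound on $B$ and the one-sided bound on $b$ (e.g.\ by a localization together with Novikov's condition), the boundedness of $\int_0^T|v|^2\,\d t$ being what makes the exponential moments finite.
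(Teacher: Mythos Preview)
Your proposal is correct and follows essentially the same coupling-by-change-of-measure scheme as the paper: the same decomposition of the Girsanov drift into a restoring part on $[0,T-r_0]$ and a $B$-cancellation part on $[0,T]$, the same use of $(\ell^\varepsilon)'\ge\kappa$ for the functional term, the same deterministic bound on $\langle M\rangle$ feeding into Young's inequality for i) and the exponential-martingale square-completion for ii). The only cosmetic differences are that the paper keeps the time-changed Brownian motion $W(\ell^\varepsilon(\cdot)-\ell^\varepsilon(0))$ and inverts via $\gamma^\varepsilon$ rather than rewriting with $\sigma_\varepsilon\,\d\tilde B$, takes the restoring drift along the unit direction $\frac{X-Y}{|X-Y|}$ with intensity $\lambda(t)|\xi(0)-\eta(0)|$ rather than your linear-in-$Z$ choice (both yield the same Lyapunov bound $|Z(t)|\le \e^{Kt}|\xi(0)-\eta(0)|h(t)/h(0)$ and the same energy $|\xi(0)-\eta(0)|^2/h(0)$), and bounds $\int_0^T\|X_t-Y_t\|_2^2\,\d t$ by splitting $t\in[0,r_0]$ and $t\in[r_0,T]$ instead of your Fubini argument.
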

\begin{proof}
Due to the existence of the delay part $B$, we will construct couplings as follows.
Let $Y_t$ solve the equation
\begin{equation}\begin{split}\label{EY}
\d Y(t)&=b(Y(t))\,\d t+B(X_t^{\ell^\varepsilon,\xi})\,\d t\\
&\quad+\lambda(t)\I_{[0,\tau)}(t) \frac{X^{\ell^\varepsilon,\xi}(t)-Y(t)}{|X^{\ell^\varepsilon,\xi}(t)
-Y(t)|}|\xi(0)-\eta(0)|\,\d \ell^\varepsilon(t)+\d W(\ell^\varepsilon(t)-
\ell^\varepsilon(0))
\end{split}\end{equation}
with $Y_0=\eta$, where
$$\lambda(t):=\frac{\e^{-Kt}}{\int_0^{T-r_0}\e^{-2Ks}
\,\d \ell^\varepsilon(s)},\quad t\geq 0,$$
and
$$\tau:=T\wedge\inf\{t\geq 0\,;\, X^{\ell^\varepsilon,\xi}(t)=Y(t)\}$$
is the coupling time. It is clear that $(X^{\ell^\varepsilon,\xi}(t),Y(t))$ is well defined for $t<\tau$. By {\bf(H)}, we have
$$
\d |X^{\ell^\varepsilon,\xi}(t)-Y(t)|\leq K|X^{\ell^\varepsilon,\xi}(t)-Y(t)|\,\d t-\lambda(t)|\xi(0)-\eta(0)|\,\d \ell^\varepsilon(t),\quad t\in[0,\tau).
$$
Thus, for $t\in[0,\tau)$,
\begin{equation}\begin{split}\label{EX-Y'}
|X^{\ell^\varepsilon,\xi}(t)-Y(t)|&\leq \e^{Kt}|\xi(0)-\eta(0)|\left(1-\int_0^t\e^{-Ks}\lambda(s)\,\d \ell^\varepsilon(s)\right)\\
&\leq  \frac{\e^{Kt}\int_t^{T-r_0}\e^{-2Ks}\,\d \ell^\varepsilon(s)}{\int_0^{T-r_0}\e^{-2Ks}\,\d \ell^\varepsilon(s)}\,|\xi(0)-\eta(0)|\\
&=:\Gamma(t)|\xi(0)-\eta(0)|.
\end{split}\end{equation}
If $\tau(\omega)>T-r_0$ for some $\omega\in\Omega$, we can
take $t=T-r_0$ in the above inequality to get
$$
    0<|X^{\ell^\varepsilon,\xi}(t)(\omega)-Y(t)(\omega)|
    \leq0,
$$
which is absurd. Therefore, $\tau\leq T-r_0$. Letting $Y(t)=X^{\ell^\varepsilon,\xi}(t)$ for $t\in[\tau,T]$, $Y(t)$ solves \eqref{EY} for $t\in[\tau,T]$. In particular, $X^{\ell^\varepsilon,\xi}_T=Y_T$. Moreover, by \eqref{EX-Y'} and $\tau\leq T-r_0$, we have \begin{align}\label{r-0}|X^{\ell^\varepsilon,\xi}(t)-Y(t)|^2\leq |\xi(0)-\eta(0)|^2\Gamma(t)^2\I_{[0,T-r_0]}(t), \ \ t\in[0,T].
\end{align}

Denote by $\gamma^\varepsilon:[\ell^\varepsilon(0),\infty)
\rightarrow[0,\infty)$ the inverse function of $\ell^\varepsilon$. Then $\ell^\varepsilon(\gamma^\varepsilon
(t))=t$ for $t\geq\ell^\varepsilon(0)$,
$\gamma^\varepsilon(\ell^\varepsilon(t))=t$
for $t\geq0$, and $t\mapsto\gamma^\varepsilon(t)$
is absolutely continuous and
strictly increasing. Let
$$
\widetilde{W}_t:=\int_{0}^{t}\Psi(u)\,\d u+
W(t)\quad\text{and}\quad M_t:=-\int_0^t
\< \Psi(u), \d
W(u)\>,\quad t\geq0,
$$
where $\Psi(u):=\Phi\circ
\gamma^\varepsilon
(u+\ell^\varepsilon(0))$ and
$$
\Phi(u):=[B(X_u
^{\ell^\varepsilon,\xi})-
B(Y_u)]
\frac{1}{(\ell^\varepsilon)'(u)}
+\lambda(u)
\I_{[0,\tau)}(u) \frac{X^{\ell^\varepsilon,\xi}(u))-Y(u)}
{|X^{\ell^\varepsilon,\xi}(u)
-Y(u)|}
\,|\xi(0)-\eta(0)|.
$$
By {\bf(H)}, the compensator of the martingale $M_t$
satisfies, for $t\geq0$,
\begin{equation}\begin{split}\label{ddfaw}
    \<M\>_t&=\int_0^t|\Psi(u)|^2\,\d u
    \leq\int_0^T|\Phi(s)|^2\,\d\ell^\varepsilon(s)\\
    &\leq 2K_1^2\int_0^T\|X_t^{\ell^\varepsilon,\xi}
    -Y_t\|^2_{2}\frac{1}{(\ell^\varepsilon)'(t)}\,\d t+2|\xi(0)-\eta(0)|^2\int_0^{T-r_0}|\lambda(t)|^2\,\d \ell^\varepsilon(t).
\end{split}\end{equation}
Recalling that $\ell$ is a sample path of
the subordinator $S$ with drift
parameter $\kappa\geq0$, one has
$$(\ell^\varepsilon)'(t)=
\frac{\ell(t+\varepsilon)-\ell(t)}{\varepsilon}+\varepsilon>\kappa,$$
and therefore
\begin{equation}\label{j3dc34d}
\int_0^T\|X_t^{\ell^\varepsilon,\xi}-Y_t\|^2_{2}\frac{1}{(\ell^\varepsilon)'(t)}\,\d t\leq\frac{1}{\kappa}\int_0^T\|X_t^{\ell^\varepsilon,\xi}
-Y_t\|^2_{2}\,\d t.
\end{equation}
Next, we focus on the estimate of $\int_0^T\|X_t^{\ell^\varepsilon,\xi}-Y_t\|^2_{2}\,\d t$.
Firstly, it is clear that for any $t\in[0,T]$,
\begin{align*}
\int_{-r_0}^0|X^{\ell^\varepsilon,\xi}_t(s)-Y_t(s)|^2\,\d s&=\int_{-r_0}^0|X^{\ell^\varepsilon,\xi}(t+s)-Y(t+s)|^2\,\d s\\
&=\int_{t-r_0}^t|X^{\ell^\varepsilon,\xi}(s)-Y(s)|^2\,\d s.
\end{align*}
This implies that for $t\in[0,r_0]$,
\begin{equation}\begin{split}\label{EX-Y''}
\int_{-r_0}^0|X^{\ell^\varepsilon,\xi}_t(s)-Y_t(s)|^2\,\d s
&=\left(\int_{t-r_0}^0+\int_0^t\right)
|X^{\ell^\varepsilon,\xi}(s)-Y(s)|^2\,\d s\\
&\leq \left(\int_{-r_0}^0+\int_{0}^{r_0}\right)
|X^{\ell^\varepsilon,\xi}(s)-Y(s)|^2\,\d s\\
&=\int_{-r_0}^0|\xi(s)-\eta(s)|^2\,\d s+\int_{0}^{r_0}|X^{\ell^\varepsilon,\xi}(s)-Y(s)|^2\,\d s,
\end{split}\end{equation}
and by \eqref{r-0}, for $t\in[r_0,T]$,
\begin{equation}\label{EX-Y'''}
\int_{-r_0}^0|X^{\ell^\varepsilon,\xi}_t(s)-Y_t(s)|^2\,\d s
\leq \int_{0}^{T-r_0}|X^{\ell^\varepsilon,\xi}
(s)-Y(s)|^2\,\d s.
\end{equation}
Combining \eqref{r-0}, \eqref{EX-Y''} and \eqref{EX-Y'''}, we
obtain
\begin{equation}\begin{split}\label{EX-Y3}
&\int_0^T\|X_t^{\ell^\varepsilon,\xi}-Y_t\|^2_{2}\,\d t\\
&\qquad=\int_0^{r_0}\left(\int_{-r_0}^0|X^{\ell^\varepsilon,\xi}
_t(s)-Y_t(s)|^2\,\d s\right)\,\d t+\int_{r_0}^T\left(\int_{-r_0}^0|X^{\ell^\varepsilon,\xi}_t(s)-Y_t(s)|^2\,\d s\right)\,\d t\\
&\qquad\quad+\int_0^T|X^{\ell^\varepsilon,\xi}
(t)-Y(t)|^2\,\d t\\
&\qquad\leq r_0\left(\int_{-r_0}^0|\xi(s)-\eta(s)|^2\,\d s+\int_{0}^{ r_0}|X^{\ell^\varepsilon,\xi}(s)-Y(s)|^2
\,\d s\right)\\
&\qquad\quad+(T-r_0)\left(\int_{0}^{T-r_0}
|X^{\ell^\varepsilon,\xi}(s)-Y(s)|^2\,\d s\right)+\int_0^{T-r_0}|X^{\ell^\varepsilon,\xi}
(t)-Y(t)|^2\,\d t\\
&\qquad\leq r_0\|\xi-\eta\|_2^2+(T+1)|\xi(0)-\eta(0)|^2
\int_{0}^{T-r_0}\Gamma(s)^2\,\d s\\
&\qquad\leq r_0\|\xi-\eta\|_2^2+(T+1)\frac{\e^{2K(T-r_0)}-1}{2K}\,|\xi(0)-\eta(0)|^2,
\end{split}\end{equation}
where in the last inequality we have used $\Gamma(s)\leq \e^{Ks}$
for $s\in[0,T-r_0]$.
By the definition of $\lambda(t)$, it is easy to see that
$$
2|\xi(0)-\eta(0)|^2\int_0^{T-r_0}|\lambda(t)|^2\,\d \ell^\varepsilon(t)\leq 2|\xi(0)-\eta(0)|^2\left(
\int_0^{T-r_0}\e^{-2Kt}\,\d \ell^\varepsilon(t)
\right)^{-1}.
$$
This, together with \eqref{ddfaw}, \eqref{j3dc34d}
and \eqref{EX-Y3}, yields that for any $t\geq0$
\begin{equation}\begin{split}\label{ddf23ds}
\<M\>_t&\leq \frac{2K_1^2}{\kappa}\left(r_0\|\xi-\eta\|_2^2+(T+1)\frac{\e^{2K(T-r_0)}-1}{2K}|\xi(0)-\eta(0)|^2\right)\\
&\quad+2|\xi(0)-\eta(0)|^2\left(
\int_0^{T-r_0}\e^{-2Kt}\,\d \ell^\varepsilon(t)
\right)^{-1}.
\end{split}\end{equation}

By Novikov's criterion, we have $\E R=1$, where
$$
R:=\exp\left[M_{\ell^\varepsilon(T)-\ell^\varepsilon(0)}
-\frac12\<M\>_{\ell^\varepsilon(T)-\ell^\varepsilon(0)}
\right].
$$
According to Girsanov's theorem,
$(\widetilde{W}_t)_{0\leq t\leq\ell^\varepsilon(T)-\ell^\varepsilon(0)}$ is a
$d$-dimensional Brownian motion under the new probability
measure $R\P$. Rewrite \eqref{EY} as
$$
    \d Y(t)=b(Y(t))\,\d t+B(Y_t)\,\d t
    +\d \widetilde{W}(\ell^\varepsilon(t)-
    \ell^\varepsilon(0)).
$$
Thus, the distribution of $(Y_t)_{0\leq t\leq T}$ under $R\P$
coincides with that of
$(X^{\ell^\varepsilon,\eta}_t)_{0\leq t\leq T}$
under $\P$; in particular, it holds that for
any $f\in \B_b(\C)$,
\begin{equation}\label{jh43cdf}
    \E f(X^{\ell^\varepsilon,\eta}_T)=
    \E_{R\P}f(Y_T)=
    \E\left[Rf(Y_T)\right]
    =\E\big[Rf(X^{\ell^\varepsilon,\xi}_T)\big].
\end{equation}
By \eqref{jh43cdf}, the Young
inequality (cf. \cite[p.\ 24]{Wbook}),
and the observation that
\begin{align*}
    \log R&=-\int_0^{\ell^\varepsilon(T)-
    \ell^\varepsilon(0)}\<\Psi(u),\d W(u)\>
    -\frac12\int_0^{\ell^\varepsilon(T)-
    \ell^\varepsilon(0)}|\Psi(u)|^2\,\d u\\
    &=-\int_0^{\ell^\varepsilon(T)-
    \ell^\varepsilon(0)}\<\Psi(u),\d \widetilde{W}(u)\>
    +\frac12\<M\>_{\ell^\varepsilon(T)-
    \ell^\varepsilon(0)},
\end{align*}
we get that, for any $f\in \B_b(\C)$ with $f\geq1$,
\begin{align*}
P_T^{\ell^\varepsilon}\log f(\eta)
&=\E\log f(X^{\ell^\varepsilon,\eta}_T)\\
&=\E\big[R\log f(X^{\ell^\varepsilon,\xi}_T)\big]\\
&\leq\log\E f(X^{\ell^\varepsilon,\xi}_T)
+\E[R\log R]\\
&=\log P_T^{\ell^\varepsilon}f(\xi)+
\E_{R\P}\log R\\
&=\log P_T^{\ell^\varepsilon}f(\xi)+
\frac12\<M\>_{\ell^\varepsilon(T)-
    \ell^\varepsilon(0)}.
\end{align*}
Combining this with \eqref{ddf23ds}, we obtain the
desired log-Harnack inequality.

Next, we prove the second assertion of the theorem.
For any non-negative $f\in \B_b(\C)$, we find
with \eqref{jh43cdf} and the H\"{o}lder inequality
\begin{equation}\begin{split}\label{fds54fff}
    (P_T^{\ell^\varepsilon}f)^p(\eta)&=\big(\E
    f(X_T^{\ell^\varepsilon,\eta})\big)^p\\
    &=\big(\E\big[R
    f(X_T^{\ell^\varepsilon,\xi})\big]\big)^p\\
    &\leq P_T^{\ell^\varepsilon}f^p(\xi)
    \cdot\big(
    \E\big[R^{p/(p-1)}\big]
    \big)^{p-1}.
\end{split}\end{equation}
Since by \eqref{ddf23ds}
\begin{align*}
    R^{p/(p-1)}&=\exp\left[
    \frac{p}{p-1}M_{\ell^\varepsilon(T)-\ell^\varepsilon(0)}
    -\frac{p}{2(p-1)}
    \<M\>_{\ell^\varepsilon(T)-\ell^\varepsilon(0)}
    \right]\\
    &=\exp\left[
    \frac{p}{2(p-1)^2}\<M\>_{\ell^\varepsilon(T)-\ell^\varepsilon(0)}
    \right]\\
    &\quad\times
    \exp\left[
    \frac{p}{p-1}M_{\ell^\varepsilon(T)-\ell^\varepsilon(0)}
    -\frac{p^2}{2(p-1)^2}
    \<M\>_{\ell^\varepsilon(T)-\ell^\varepsilon(0)}
    \right]\\
    &\leq\exp\left[
    \frac{p}{(p-1)^2}|\xi(0)-\eta(0)|^2
    \left(
    \int_0^{T-r_0}\e^{-2Kt}\,\d \ell^\varepsilon(t)
    \right)^{-1}
    \right]\\
    &\quad\times\exp\left[
    \frac{pK_1^2}{(p-1)^2\kappa}\left(
    r_0\|\xi-\eta\|_2^2+(T+1)\frac{\e^{2K(T-r_0)}-1}{2K}|\xi(0)-\eta(0)|^2
    \right)
    \right]\\
    &\quad\times\exp\left[
    \frac{p}{p-1}M_{\ell^\varepsilon(T)-\ell^\varepsilon(0)}
    -\frac{p^2}{2(p-1)^2}
    \<M\>_{\ell^\varepsilon(T)-\ell^\varepsilon(0)}
    \right],
\end{align*}
and noting the fact that
$\exp\left[
\frac{p}{p-1}M_{\ell^\varepsilon(t)-\ell^\varepsilon(0)}
-\frac{p^2}{2(p-1)^2}
\<M\>_{\ell^\varepsilon(t)-\ell^\varepsilon(0)}
\right]$, $0\leq t\leq T$, is a martingale
with mean $1$ -- this is due
to Novikov's criterion -- we know that
\begin{align*}
    \E\left[R^{p/(p-1)}\right]
    &\leq\exp\left[
    \frac{p}{(p-1)^2}|\,\xi(0)-\eta(0)|^2
    \left(
    \int_0^{T-r_0}\e^{-2Kt}\,\d \ell^\varepsilon(t)
    \right)^{-1}
    \right]\\
    &\quad\times\exp\left[
    \frac{pK_1^2}{(p-1)^2\kappa}\left(
    r_0\|\xi-\eta\|_2^2+(T+1)\frac{\e^{2K(T-r_0)}-1}{2K}\,|\xi(0)-\eta(0)|^2
    \right)
    \right].
\end{align*}
Inserting this estimate into \eqref{fds54fff}, we
get the power-Harnack inequality.
\end{proof}

To prove Proposition \ref{dfr3s} by
using Lemma \ref{Ptl}, we first prove
the following lemma.

\begin{lem}\label{approxi} Let $\varepsilon\in(0,1)$
and $T>0$. If
 $g^{(\varepsilon)}:[-r_0,\infty)\rightarrow[0,\infty)$
satisfies $g^{(\varepsilon)}(s)=0$ for $s\in[-r_0,0]$,
$\int_0^T\|g^{(\varepsilon)}_t\|_2^2\,\d t<\infty$,
and
$$|g^{(\varepsilon)}(t)|^2\leq C\int_0^t
\|g^{(\varepsilon)}_r\|_2^2\,\d r
+h^{(\varepsilon)}(t),\quad t\in[0,T],$$
where $C>0$ is a constant and $h^{(\varepsilon)}:[0,T]\rightarrow[0,\infty)$ is
measurable such that
$$\sup_{\varepsilon\in(0,1),t\in[0,T]}h^{(\varepsilon)}(t)
<\infty$$ and $\lim_{\varepsilon\downarrow0}
h^{(\varepsilon)}(t)=0$ for any $t\in[0,T]$.
Then we have
$$\lim_{\varepsilon\downarrow0}\|g^{(\varepsilon)}_t\|_2=0
,\quad t\in[0,T].$$
\end{lem}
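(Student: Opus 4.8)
The plan is to collapse the delayed (segment-norm) inequality into an ordinary scalar integral inequality for $\varphi^{(\varepsilon)}(t):=\|g^{(\varepsilon)}_t\|_2^2$, apply Gronwall's lemma, and then pass to the limit by dominated convergence. The single genuinely delay-specific step is the first one; everything afterwards is standard.

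First I would exploit the form of $\|\cdot\|_2$ together with the vanishing of $g^{(\varepsilon)}$ on $[-r_0,0]$. Writing
$$\|g^{(\varepsilon)}_t\|_2^2=\int_{-r_0}^0|g^{(\varepsilon)}(t+s)|^2\,\d s+|g^{(\varepsilon)}(t)|^2$$
and substituting $u=t+s$, the first integral equals $\int_{t-r_0}^t|g^{(\varepsilon)}(u)|^2\,\d u$. Since $g^{(\varepsilon)}\equiv0$ on $[-r_0,0]$, the part of this integral over $(t-r_0)\wedge 0$ to $0$ contributes nothing, so for every $t\in[0,T]$ (whether or not $t\le r_0$) it is bounded by $\int_0^t|g^{(\varepsilon)}(u)|^2\,\d u$. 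Hence
$$\|g^{(\varepsilon)}_t\|_2^2\le\int_0^t|g^{(\varepsilon)}(u)|^2\,\d u+|g^{(\varepsilon)}(t)|^2,\quad t\in[0,T].$$

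Next I would use $|g^{(\varepsilon)}(u)|^2\le\|g^{(\varepsilon)}_u\|_2^2$, immediate from the definition of $\|\cdot\|_2$, to bound the integral term, and the hypothesis $|g^{(\varepsilon)}(t)|^2\le C\int_0^t\|g^{(\varepsilon)}_r\|_2^2\,\d r+h^{(\varepsilon)}(t)$ to bound the pointwise term. This yields the closed integral inequality
$$\varphi^{(\varepsilon)}(t)\le(1+C)\int_0^t\varphi^{(\varepsilon)}(s)\,\d s+h^{(\varepsilon)}(t),\quad t\in[0,T].$$
The integrability assumption $\int_0^T\|g^{(\varepsilon)}_t\|_2^2\,\d t<\infty$ ensures $\varphi^{(\varepsilon)}\in L^1([0,T])$, so Gronwall's inequality applies and gives
$$\varphi^{(\varepsilon)}(t)\le h^{(\varepsilon)}(t)+(1+C)\int_0^t h^{(\varepsilon)}(s)\,\e^{(1+C)(t-s)}\,\d s.$$
I would then let $\varepsilon\downarrow0$: the first term vanishes by $h^{(\varepsilon)}(t)\to0$, while in the integral the integrand is dominated on $[0,t]$ by the finite constant $\big(\sup_{\varepsilon\in(0,1),\,s\in[0,T]}h^{(\varepsilon)}(s)\big)\,\e^{(1+C)T}$ and tends pointwise to $0$, so dominated convergence sends the integral to $0$. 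Thus $\|g^{(\varepsilon)}_t\|_2^2\to0$, which is the claim.

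The only real subtlety is the first step, where the delay must be absorbed with no surviving initial-data contribution: the $L^2$-history part of the segment norm over $[t-r_0,t]$ can be majorized by $\int_0^t|g^{(\varepsilon)}|^2$ precisely because $g^{(\varepsilon)}$ vanishes on $[-r_0,0]$, so the negative-time portion drops out. After that the estimate is a textbook Gronwall argument, and the final passage to the limit relies exactly on the uniform bound $\sup_{\varepsilon\in(0,1),\,t\in[0,T]}h^{(\varepsilon)}(t)<\infty$ to supply the dominating function.
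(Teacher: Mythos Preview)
Your proof is correct and follows the same skeleton as the paper's: absorb the history part of $\|g^{(\varepsilon)}_t\|_2^2$ into $\int_0^t|g^{(\varepsilon)}|^2$ using $g^{(\varepsilon)}|_{[-r_0,0]}=0$, close up to a scalar Gronwall inequality, and pass to the limit by dominated convergence. The one difference is in how the integral $\int_0^t|g^{(\varepsilon)}(u)|^2\,\d u$ is handled: the paper re-inserts the hypothesis $|g^{(\varepsilon)}(s)|^2\le C\int_0^s\|g^{(\varepsilon)}_r\|_2^2\,\d r+h^{(\varepsilon)}(s)$ under the integral, picking up an extra factor $t$ and an extra forcing term $\int_0^t h^{(\varepsilon)}(s)\,\d s$, whereas you use the immediate bound $|g^{(\varepsilon)}(u)|^2\le\|g^{(\varepsilon)}_u\|_2^2$; your route is slightly cleaner, giving the Gronwall constant $1+C$ instead of $C(T+1)$ and forcing term $h^{(\varepsilon)}(t)$ alone, but the overall argument is the same.
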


\begin{proof}
    Since $g^{(\varepsilon)}(s)=0$
    for $s\in[-r_0,0]$, it holds that
    \begin{align*}
\int_{-r_0}^0|g^{(\varepsilon)}(t+s)|^2\,\d s&=\left(\int_{-r_0+t}^0+\int_0^t\right)
|g^{(\varepsilon)}(s)
|^2\,\d s\\
&\leq\int_0^t|g^{(\varepsilon)}(s)|^2\,\d s\\
&\leq C\int_0^t\left(\int_0^s
\|g^{(\varepsilon)}_r\|_2^2\,\d r
\right)\d s
+\int_0^th^{(\varepsilon)}(s)
\,\d s\\
&\leq Ct\int_0^t
\|g^{(\varepsilon)}_r\|_2^2\,\d r
+\int_0^th^{(\varepsilon)}(s)
\,\d s.
\end{align*}
Thus, we find that for any $t\in[0,T]$
\begin{align*}
    \|g^{(\varepsilon)}_t\|_2^2
    &=\int_{-r_0}^0|g^{(\varepsilon)}(t+s)|^2\,\d s
+|g^{(\varepsilon)}(t)|^2\\
&\leq C(t+1)\int_0^t
\|g^{(\varepsilon)}_r\|_2^2\,\d r+H^{(\varepsilon)}(t)\\
&\leq C(T+1)\int_0^t
\|g^{(\varepsilon)}_r\|_2^2\,\d r+H^{(\varepsilon)}(t),
\end{align*}
where
$$
    H^{(\varepsilon)}(t):=
    h^{(\varepsilon)}(t)
+\int_0^th^{(\varepsilon)}(s)
\,\d s.
$$
Now we can apply Gronwall's inequality to get
that, for all $t\in[0,T]$,
$$
    \|g^{(\varepsilon)}_t\|_2^2
    \leq H^{(\varepsilon)}(t)
    +C(T+1)\int_0^tH^{(\varepsilon)}(s)\,
    \e^{C(T+1)(t-s)}\,\d s.
$$
By our assumptions, we know that $\lim_{\varepsilon\downarrow0}
H^{(\varepsilon)}(t)=0$ for all $t\in[0,T]$. Letting $\varepsilon\downarrow0$ on
both sides of the above inequality and using
the dominated convergence
theorem, we complete the proof.
\end{proof}

\begin{proof}[Proof of Proposition \ref{dfr3s}]
Fix $T>r_0$. By a standard approximation argument, we may and do
assume that $f\in C_b(\C)$.

\emph{Step 1:} First, we assume
that $b$ is globally Lipschitzian: there exists a constant
$C>0$ such that
$$
    |b(x)-b(y)|\leq C|x-y|,\quad x,y\in\R^d.
$$
By the Lipschitz continuity of $b$ and $B$, and
noting that $|X^{\ell^\varepsilon,\xi}(r)-X^{\ell,\xi}(r)|
\leq\|X^{\ell^\varepsilon,\xi}_r
-X^{\ell,\xi}_r\|_2$, we have for
$t\geq0$
\begin{align*}
|X^{\ell^\varepsilon,\xi}(t)-X^{\ell,\xi}(t)|&\leq C\int_0^t|X^{\ell^\varepsilon,\xi}(r)-X^{\ell,\xi}(r)|\,\d r+K_1\int_0^t\|X^{\ell^\varepsilon,\xi}_r
-X^{\ell,\xi}_r\|_2\,\d r\\
&\quad+|W(\ell^{\varepsilon}(t)-\ell^{\varepsilon}(0))
-W(\ell(t))|\\
&\leq(C+K_1)\int_0^t\|X^{\ell^\varepsilon,\xi}_r
-X^{\ell,\xi}_r\|_2\,\d r+|W(\ell^{\varepsilon}(t)-\ell^{\varepsilon}(0))
-W(\ell(t))|.
\end{align*}
By the elementary inequality
    $$
        (u+v)^2\leq2u^2+2v^2,\quad u,v\geq0,
    $$
    and the H\"{o}lder inequality, we get that
    for $t\in[0,T]$,
\begin{align*}
    |X^{\ell^\varepsilon,\xi}(t)-X^{\ell,\xi}(t)|^2
    &\leq2(C+K_1)^2t\int_0^t\|X^{\ell^\varepsilon,\xi}_r
-X^{\ell,\xi}_r\|_2^2\,\d r+2|W(\ell^{\varepsilon}(t)-\ell^{\varepsilon}(0))
-W(\ell(t))|^2\\
&\leq2(C+K_1)^2T\int_0^t\|X^{\ell^\varepsilon,\xi}_r
-X^{\ell,\xi}_r\|_2^2\,\d r+2|W(\ell^{\varepsilon}(t)-\ell^{\varepsilon}(0))
-W(\ell(t))|^2.
\end{align*}
Applying Lemma \ref{approxi} with
$g^{(\varepsilon)}(t)
=|X^{\ell^\varepsilon,\xi}(t)-X^{\ell,\xi}(t)|$ and
$h^{(\varepsilon)}(t)
=2|W(\ell^{\varepsilon}(t)-\ell^{\varepsilon}(0))
-W(\ell(t))|^2$, we conclude that $X^{\ell^\varepsilon,\xi}_T\to X^{\ell,\xi}_T$
in $\C$ as $\varepsilon\downarrow0$, and so
$$
    \lim_{\varepsilon\downarrow0}
    P_T^{\ell^\varepsilon}f=
    P_T^\ell f,
    \quad f\in C_b(\C).
$$
Since $\ell$ is of bounded variation, it is easy
to get from \eqref{approximation} that
$$
    \lim_{\varepsilon\downarrow0}
    \int_0^{T-r_0}\e^{-2Kt}\,\d \ell^\varepsilon(t)
    =\int_0^{T-r_0}\e^{-2Kt}\,\d \ell(t).
$$
Letting $\varepsilon\downarrow0$ in Lemma \ref{Ptl},
we obtain the desired inequalities.

\medskip\emph{Step 2:} For the general case, we shall
make use of the approximation argument proposed
in \cite[part (c) of proof of Theorem 2.1]{WW}.
Let
$$
    \tilde{b}(x):=b(x)-Kx,\quad x\in\R^d.
$$
Then $\tilde{b}$ satisfies the dissipative condition:
$$
    \<\tilde{b}(x)-\tilde{b}(y),x-y\>\leq0,
    \quad x,y\in\R^d,
$$
and it is easy to see that the mapping
$\operatorname{id}-\varepsilon\tilde{b}:\R^d\rightarrow\R^d$
is injective for any $\varepsilon>0$.
For $\varepsilon>0$, let $\tilde{b}^{(\varepsilon)}$
be the Yoshida approximation of $\tilde{b}$, i.e.
$$
    \tilde{b}^{(\varepsilon)}:=
    \frac{1}{\varepsilon}\left[
    \left(
    \operatorname{id}-\varepsilon\tilde{b}
    \right)^{-1}(x)-x
    \right],\quad x\in\R^d.
$$
Then $\tilde{b}^{(\varepsilon)}$ is dissipative and globally Lipschitzian, $|\tilde{b}^{(\varepsilon)}|\leq |\tilde{b}|$ and $\lim_{\varepsilon\downarrow0}\tilde{b}^{(\varepsilon)}=\tilde{b}$. Let $b^{(\varepsilon)}(x):=\tilde{b}^{(\varepsilon)}(x)+Kx$. Then $b^{(\varepsilon)}$ is also Lipschitzian and $$\<b^{(\varepsilon)}(x)-b^{(\varepsilon)}(y),x-y\>\leq K|x-y|^2.$$
Let $X^{\ell,(\varepsilon),\xi}_t$ solve the SDE \eqref{jg4dgv}
with $b$
replaced by $b^{(\varepsilon)}$ and
$X^{\ell,(\varepsilon),\xi}_0=\xi\in\C$. Denote
by $P_t^{\ell,(\varepsilon)}$ the associated
semigroup. Due to the first part of
the proof, the statements of Proposition \ref{dfr3s}
hold with $P_t^{\ell}$ replaced
by $P_t^{\ell,(\varepsilon)}$. If
\begin{equation}\label{kj64cf3dk}
    \lim_{\varepsilon\downarrow0}
    P_T^{\ell,(\varepsilon)}f=
    P_T^\ell f,
    \quad f\in C_b(\C),
\end{equation}
then we complete the proof by
applying Proposition \ref{dfr3s}
with $P_t^{\ell}$ replaced
by $P_t^{\ell,(\varepsilon)}$ and
letting $\varepsilon\downarrow0$. Indeed, noting that
\begin{align*}
 &\d |X^{\ell,(\varepsilon),\xi}(t)-X^{\ell,\xi}(t)|^2\\
 &\qquad\quad=2\<X^{\ell,(\varepsilon),\xi}(t)-X^{\ell,\xi}(t), b^{(\varepsilon)}(X^{\ell,(\varepsilon),\xi}(t))
 -b^{(\varepsilon)}(X^{\ell,\xi}(t))\>\,\d t\\
 &\qquad\qquad+2\<X^{\ell,(\varepsilon),\xi}(t)-X^{\ell,\xi}(t), b^{(\varepsilon)}(X^{\ell,\xi}(t))
 -b(X^{\ell,\xi}(t))\>\,\d t\\
 &\qquad\qquad+2\<X^{\ell,(\varepsilon),\xi}(t)-X^{\ell,\xi}(t), B(X^{\ell,(\varepsilon),\xi}_t)-B(X^{\ell,\xi}_t)\>\,\d t\\
 &\quad\qquad\leq (2K+1) |X^{\ell,(\varepsilon),\xi}(t)
 -X^{\ell,\xi}(t)|^2\,\d t
 +|b^{(\varepsilon)}(X^{\ell,\xi}(t))
 -b(X^{\ell,\xi}(t))|^2\,\d t\\
 &\qquad\qquad+2K_1\|X^{\ell,(\varepsilon),\xi}_t
 -X^{\ell,\xi}_t\|_2^2\,\d t\\
 &\quad\qquad\leq (2|K|+2K_1+1)\|X^{\ell,(\varepsilon),\xi}_t
 -X^{\ell,\xi}_t\|_2^2\,\d t
 +|b^{(\varepsilon)}(X^{\ell,\xi}(t))
 -b(X^{\ell,\xi}(t))|^2\,\d t,
 \end{align*}
one has for $t\in[0,T]$
\begin{align*}
    &|X^{\ell,(\varepsilon),\xi}(t)-X^{\ell,\xi}(t)|^2\\
    &\quad\leq(2|K|+2K_1+1)\int_0^t
    \|X^{\ell,(\varepsilon),\xi}_r
 -X^{\ell,\xi}_r\|_2^2\,\d r
 +\int_0^t|b^{(\varepsilon)}(X^{\ell,\xi}(r))
 -b(X^{\ell,\xi}(r))|^2\,\d r\\
   &\quad=(2|K|+2K_1+1)\int_0^t
    \|X^{\ell,(\varepsilon),\xi}_r
 -X^{\ell,\xi}_r\|_2^2\,\d r
 +\int_0^t|\tilde{b}^{(\varepsilon)}(X^{\ell,\xi}(r))
 -\tilde{b}(X^{\ell,\xi}(r))|^2\,\d r.
\end{align*}
Applying Lemma \ref{approxi} with
$g^{(\varepsilon)}(t)=
|X^{\ell,(\varepsilon),\xi}(t)-X^{\ell,\xi}(t)|$
and $h^{(\varepsilon)}(t)
=\int_0^t|\tilde{b}^{(\varepsilon)}(X^{\ell,\xi}(r))
 \\ -\tilde{b}(X^{\ell,\xi}(r))|^2\,\d r$, we
 find that $X^{\ell,(\varepsilon),\xi}_T\to X^{\ell,\xi}_T$
in $\C$ as $\varepsilon\downarrow0$, and
thus \eqref{kj64cf3dk} follows.
\end{proof}

\section{Proofs of Theorem \ref{T3.2} and Example \ref{ex1}}

\begin{proof}[Proof of Theorem \ref{T3.2}]
    Since the processes $W$ and $S$ are independent,
    we have
    \begin{equation}\label{jg21hhsd}
        P_{T}f(\cdot)=\E\left[P_{T}^{\ell}f(\cdot)
        \left|_{\ell=S}\right.
        \right],\quad f\in\B_b(\C).
    \end{equation}
    By the first assertion of Proposition \ref{dfr3s},
    for all
    $f\in\B_b(\C)$ with $f\geq1$,
    \begin{align*}
        P_T\log f(\eta)&=\E\left[
        P_T^\ell\log f(\eta)\left|_{\ell=S}\right.
        \right]\\
        &\leq\E\left[
        \log P_T^\ell f(\xi)\left|_{\ell=S}\right.
        \right]+|\xi(0)-\eta(0)|^2\,\E\left(
\int_0^{T-r_0}\e^{-2Kt}\,\d S(t)
\right)^{-1}\\
&\quad+\frac{K_1^2}{\kappa}\left(r_0\|\xi-\eta\|_2^2
+(T+1)\frac{\e^{2K(T-r_0)}-1}{2K}|\xi(0)-\eta(0)|^2\right),
\end{align*}
which, together with the Jensen inequality and
\eqref{jg21hhsd}, implies the log-Harnack inequality.
Analogously, by the second assertion of
Proposition \ref{dfr3s},
    for all non-negative
    $f\in\B_b(\C)$,
\begin{align*}
        P_Tf(\eta)&=\E\left[
        P_T^\ell f(\eta)\left|_{\ell=S}\right.
        \right]\\
        &\leq\left.\E\left[
        \big(P_T^\ell f^p(\xi)\big)^{1/p}
        \exp\left[\frac{1}{p-1}|\xi(0)-\eta(0)|^2\left(
\int_0^{T-r_0}\e^{-2Kt}\,\d \ell(t)
\right)^{-1}\right]\right|_{\ell=S}\right]\\
&\quad
\times\exp\left[\frac{1}{p-1} \frac{K_1^2}{\kappa}\left(r_0\|\xi-\eta\|_2^2
+(T+1)\frac{\e^{2K(T-r_0)}-1}{2K}|\xi(0)-\eta(0)|^2\right)\right].
\end{align*}
It remains to use the H\"{o}lder inequality
and \eqref{jg21hhsd} to derive the power-Harnack
inequality.
\end{proof}

\begin{proof}[Proof of Example \ref{ex1}]
    By the assumption, one has
    $$S(t)\geq\kappa t+\tilde{S}(t)\geq (\kappa t)\vee\tilde{S}(t),\quad t\geq0,$$
    where $\tilde{S}$ is an $\alpha$-stable
    subordinator with Bernstein function
    $\tilde{\phi}(u)=cu^\alpha$. Combining this
    with Theorem \ref{T3.2} and the moment
    estimates for subordinators in
    \cite[Theorem 3.8\,(a) and (b)]{DS15}, we
    get the desired estimates.
\end{proof}

\end{document}